\newtheorem{Thm}{Theorem}[section]
\newtheorem{remark}[Thm]{Remark}
\DeclareMathOperator{\sech}{sech}
\newcommand\R{\mathbb{R}}
\newcommand\N{\mathbb{N}}
\newcommand\Z{\mathbb{Z}}
\title{Numerical solution of the generalized 
Kadomtsev-Petviashvili equations with compact finite difference schemes}
\author{ {\bf J.-P. Chehab, P. Garnier$^*$, Y. Mammeri}\\ 
{\small Laboratoire Ami\'enois de Math\'ematique Fondamentale et Appliqu\'ee,} \\  {\small CNRS UMR 7352, Universit\'e 
de Picardie Jules Verne, France}\\
{\small 80039 Amiens, France.}\\
{\small $^*$Corresponding author: pierre.garnier@u-picardie.fr}
}
\date{2016}
\begin{document}

\maketitle

\begin{center}
\begin{minipage}[t]{13.5cm}{\footnotesize {\bf Abstract.} 
We propose compact finite difference schemes to solve the KP equations $u_t + u_{xxx} + u^p u_x + \lambda \partial_x^{-1} u_{yy} = 0$. When $p=1$, this equation describes the propagation of small amplitude long waves in shallow water with weak transverse effects. We first present the numerical schemes which are compared to the Fourier spectral method. After establishing the numerical convergence, the scheme is validated. We then depict the behavior of solutions in the context of solitons instabilities and the blow-up.} 
\end{minipage}  
\end{center}

\begin{minipage}[t]{13.5cm}{\footnotesize {\bf Keywords.} 
Kadomtsev-Petviasvili equations, compact finite difference schemes, blow-up, transverse instabilities, Zaitsev solitons, line-soliton.
}
\end{minipage}

\vspace{0.5cm}

{\footnotesize {\bf MS Codes.} 35Q53, 65M06, 65M70, 35G25, 76B15 }





\section{Introduction}
	The Kadomtsev–Petviashvili equations
$$
	u_t + u_{xxx} + u u_x \pm \partial_x^{-1} u_{yy} = 0
$$
were proposed to describe
the propagation of long dispersive waves in shallow water taking into account weakly transverse effect in the 
$y-$direction \cite{KP70,La03,LS06}.
Here $\partial_x^{-1}$ denotes the antiderivative and is defined by the Fourier transform
$$
\widehat{\partial_x^{-1}u}\left( \xi_1, \xi_2 \right) = \frac{\hat u\left( \xi_1, \xi_2 \right)}{i\xi_1}.
$$
This antiderivative is introduced from the one-dimensional transport operator $\partial_t + \partial_x$ by 
considering weakly transverse perturbations, {\it i.e.} in the frequency region $|\xi_2/\xi_1|\ll 1.$ 

In this paper, we are concerned with the generalized Kadomtsev-Petviashvili equations
\begin{equation}
	u_t + u_{xxx} + u^p u_x + \lambda \partial_x^{-1} u_{yy} = 0,  
  \label{eq1}
\end{equation}
with $p \geq 1.$ Here the equation is called KP-I when $\lambda=-1$ and KP-II when $\lambda=1$. 

It is well known that sufficiently smooth solutions vanishing at infinity  preserve 
the following mass
$$
\int_{-\infty}^{+\infty}  \int_{-\infty}^{+\infty}  u^2(x,y,t) dx dy
$$
and energy
$$
\int_{-\infty}^{+\infty}  \int_{-\infty}^{+\infty} \left( \frac{u^{p+2}}{(p+1)(p+2)} -\frac{u_x^2}{2}
+ \frac{\lambda(\partial_x^{-1}u_y)^2}{2}  \right)(x,y,t) dx dy.
$$
Note also that one has a mass-zero constraint on $x$ due to the equation itself, 
more precisely we have
$$
\int_{-\infty}^{+\infty} u(x,y,t) dx =0,\; y\in \R, t \neq 0,
$$
without any restraint on the initial data \cite{MST07}.
This condition is of paramount importance in the construction of numerical schemes.

Numerical behavior of the KP equations has been intensely studied recently \cite{Ha01, HMM09, ISS95, ISR02, KSM07, KS10, KR10, My07, Wa01}.
Due to the definition of the antiderivative, spectral methods has been privileged.
This needs to impose periodic boundary conditions.
In our work, compact finite difference schemes are proposed \cite{Lele}. Their benefits are
to easily adapt  to all boundary conditions, and also to increase the order. Remark that, we
can prove \cite{My09}
that  the antiderivative in a bounded domain is equivalently given by an integration in space
$$
\partial_x^{-1} u (x,y,t) = \int_{-\infty}^x u(x',y,t) dx'.
$$
The paper is organized as follows. Section $2$ deals with the numerical schemes. It consists
of building compact finite difference and spectral-compact schemes of arbitrary large order. 
These new schemes are validated in Section $3$ with the Zaitsev soliton \cite{Za83}
\[
\psi_c(x,y,t) = 12 \alpha^2 \frac{1-\beta \cosh(\alpha x - \omega t)
  \cos(\delta y)}{(\cosh(\alpha x -\omega t) -\beta \cos(\delta y))^2}\,, 
\beta = \sqrt{\frac{\delta^2-3\alpha^4}{\delta^2}}\,, 
3\alpha^4 < \delta^2\,, \omega = \frac{\delta^2 + \alpha^4}{\alpha}, \; c = \frac{\omega}{\alpha}.
\]
as the exact solution of the KP-I equation. Finally, the robustness of the compact scheme is tested through three classical experiments and is compared to fully  spectral method. First,
the instability of the Zaitsev soliton is established using Schwartzian perturbations \cite{KS10}.
Then transverse instabilities of the Korteweg-de Vries line-soliton 
\[
\Phi_c(x,t) = \left( \frac{(p+1)(p+2)}{2}\,c\right)^{1/p}{\rm sech}^{2/p}
\left(\frac{p\sqrt{c}}{2}\, (x-ct) \right),
\]
under the 
KP-I flow is studied \cite{APS97,AS97,RT08,RT09,RT10,RT11,RT12}. 
We conclude with the blow-up in finite time \cite{Li01,Li02,Sa93,Sa95} 
of the solution starting from the initial data
$$
u_0(x) = 3 \partial_{xx} {\rm e}^{-\alpha(x^2+y^2)}.
$$

\section{Time and space discretization of KP equations}
\label{sec:num_scheme}
	The numerical schemes are introduced. We focus on the compact schemes. Then we remind the spectral scheme used in \cite{Ha01, HMM09, ISS95, KSM07, KS10}.

\subsection{The time discretization}

Let $\Delta t>0$ be the time step. In order to preserve the mean and the $L^2$-norm, we use a Sanz-Serna scheme. It is written
\begin{equation}\label{time_scheme}
	\frac{u(t_{n+1}) - u(t_n)}{\Delta t} + \partial_{xxx} \left( \frac{u(t_n) + u(t_{n+1})}{2} \right) + \frac{1}{p+1} \partial_x  \left( \frac{u(t_n) + u(t_{n+1})}{2} \right)^{p+1} + \lambda \partial_x^{-1} \partial_{yy} \left( \frac{u(t_n) + u(t_{n+1})}{2} \right) = 0.
\end{equation}

\begin{Thm}
	For solutions smooth enough, the scheme \eqref{time_scheme} conserves the mean and the $L^2$-norm.
\end{Thm}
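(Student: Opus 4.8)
The strategy is the standard energy-type argument for conservative time discretizations: test the scheme against a well-chosen "test function" (here, $1$ for the mean, and the average $\frac{u(t_n)+u(t_{n+1})}{2}$ for the $L^2$-norm) and integrate in space over the domain, exploiting that each spatial operator appearing in \eqref{time_scheme} is either in divergence form or skew-adjoint so that the corresponding integral vanishes.

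For conservation of the mean, I would integrate \eqref{time_scheme} over the spatial domain. The first term gives $\frac{1}{\Delta t}\left(\int u(t_{n+1}) - \int u(t_n)\right)$. The remaining three terms all vanish: $\partial_{xxx}(\cdots)$ and $\partial_x(\cdots)^{p+1}$ are exact $x$-derivatives, so they integrate to boundary terms that disappear (periodicity or decay at infinity for smooth enough solutions); and $\partial_x^{-1}\partial_{yy}(\cdots)$ can be written as $\partial_y$ of $\partial_x^{-1}\partial_y(\cdots)$, an exact $y$-derivative, which likewise integrates to zero. Hence $\int u(t_{n+1}) = \int u(t_n)$.

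For conservation of the $L^2$-norm, set $w = \frac{u(t_n)+u(t_{n+1})}{2}$, multiply \eqref{time_scheme} by $w$, and integrate over the domain. The time-difference term becomes $\frac{1}{\Delta t}\int (u(t_{n+1})-u(t_n))\cdot\frac{u(t_{n+1})+u(t_n)}{2} = \frac{1}{2\Delta t}\left(\|u(t_{n+1})\|_{L^2}^2 - \|u(t_n)\|_{L^2}^2\right)$, which is the key algebraic identity making the midpoint-type averaging work. The dispersive term $\int w\,\partial_{xxx}w$ vanishes because $\partial_{xxx}$ is skew-adjoint (integrate by parts twice; the surviving term is $\int \partial_x w\,\partial_{xx}w = \frac12\int\partial_x((\partial_x w)^2)=0$). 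The nonlinear term $\frac{1}{p+1}\int w\,\partial_x(w^{p+1})$ is handled by parts: it equals $-\frac{1}{p+1}\int w^{p+1}\partial_x w = -\frac{1}{(p+1)(p+2)}\int\partial_x(w^{p+2}) = 0$. The antiderivative term $\lambda\int w\,\partial_x^{-1}\partial_{yy}w$ requires showing $\int v\,\partial_x^{-1}\partial_{yy}v = 0$: writing $\partial_x^{-1}\partial_{yy}v = \partial_y(\partial_x^{-1}\partial_y v)$ and integrating by parts in $y$ gives $-\int \partial_y v\,\partial_x^{-1}\partial_y v$, which vanishes because $\partial_x^{-1}$ is skew-adjoint in $x$ (equivalently, in Fourier variables the multiplier $\xi_2^2/(i\xi_1)$ is purely imaginary, so $\int \overline{\hat v}\cdot\frac{\xi_2^2}{i\xi_1}\hat v$ is imaginary while the integral of a real quantity, giving $0$). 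Combining, $\|u(t_{n+1})\|_{L^2} = \|u(t_n)\|_{L^2}$.

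The only delicate point is the treatment of the nonlocal operator $\partial_x^{-1}\partial_{yy}$: one must be careful that $\partial_x^{-1}$ is well-defined (this is where the mass-zero constraint $\int u\,dx = 0$ quoted in the introduction matters) and that the skew-adjointness $\int f\,\partial_x^{-1}g = -\int (\partial_x^{-1}f)\,g$ holds on the working function space, with no boundary contribution. I expect the rest to be routine integration by parts; the proof should be phrased "for solutions smooth enough" so that all the boundary terms genuinely vanish, exactly as in the statement.
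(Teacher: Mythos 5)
Your proposal follows exactly the paper's own argument: integrate the scheme over space to get conservation of the mean, and multiply by $\frac{u(t_n)+u(t_{n+1})}{2}$ and integrate to get conservation of the $L^2$-norm. The paper states this in two sentences without detail, so your fuller account of the integrations by parts and the skew-adjointness of $\partial_x^{-1}\partial_{yy}$ is simply a more careful write-up of the same proof.
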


\begin{proof}
	The mass conservation is showed by integrating \eqref{time_scheme} over space. The $L^2$-norm conservation is obtained by multiplying \eqref{time_scheme} with ${\displaystyle  \frac{u(t_n) + u(t_{n+1})}{2}}$, then integrating over space.
\end{proof}

\subsection{The space discretization}

From \eqref{time_scheme}, we write the discretization in space as
\begin{equation}\label{full_gnl_scheme}
	\frac{U_{n+1} - U_n}{\Delta t} + A_{x,3} \left( \frac{U_{n+1} + U_n}{2} \right) + \frac{1}{p+1} A_{x,1}  \left( \frac{U_{n+1} + U_n}{2} \right)^{p+1} + \lambda A_{x,-1} A_{y,2} \left( \frac{U_{n+1} + U_n}{2} \right) = 0,
\end{equation}
where $A_{x,k}$, $A_{y,k}$, is the matrix associated to the discretization of the $k$-th derivative in the $x$ direction and the $y$ direction, respectively.  Here, $U_n$ denotes the approximation of $u$ at time $t_n=n\Delta t$, $n \in \N$. We now mention different ways to compute these matrices, namely through compact schemes and Fourier transform.

\subsubsection{Compact finite different schemes}

We describe how to discretize the derivative and the antiderivative operators with high-order schemes.
To simplify the readings, and, since two dimensional discretizations are easily obtained from the Kronecker product, we focus on the one dimensional case. 
We just remind that the Kronecker product of two matrices is the tensor product and has the same notation $\otimes$.
Let $f$ be a function defined in $[-L,L]$, $L>0$, and let $h>0$ denote the space step. We set 
 $x_i = -L + h(i-1)$  and $f_i = f(x_i)$ for $0 \leq i \leq N$. 
Let us remind the principle of the compact finite difference method. The classical order $2$ central scheme that discretizes the first derivative, is given by
\[ f_i' = \frac{f_{i+1}-f_{i-1}}{2h}. \]
To improve the order, we consider more points \cite{Lele}. For example, the scheme
\[ \alpha \left( f_{i-1}' + f_{i+1}' \right) + f_i' = b \frac{f_{i-2} - f_{i+2}}{4h} + a \frac{f_{i-1} - f_{i+1}}{2h}. \]
allows to reach order 6 by choosing suitable coefficients $\alpha$, $a$ and $b$ from Taylor expansions. Indeed, the only possible values are $a=\frac{14}{9}$, $\alpha=\frac{1}{3}$ and $b=\frac{1}{9}$. To obtain order 2, the parameters has to verify the condition $1+2\alpha = a+b$ e.g. $a=1$, $\alpha=b=0$ gives the classical central scheme. A 4-th order can be provided by $a=\frac{3}{2}$, $\alpha=\frac{1}{4}$ and $b=0$. We rewrite these schemes as a system $P_m F' = Q_m F$ where $P_m$ and $Q_m$ are the matrices defined by

\[ P_m = 
	\begin{pmatrix}
   		\shorthandoff{:}\begin{tikzpicture}[scale=0.75]
   			\node(1) at (0,0) {1};
   			\node(alpha1) at (0.75,0) {$\alpha$};
   			\node(alpha2) at (0,-0.75) {$\alpha$};
   			\node(1bis) at (5,-5) {1};
   			\node(alpha3) at (4.25,-5) {$\alpha$};
   			\node(alpha4) at (5,-4.25) {$\alpha$};
   			\node at (5,0) {$\alpha$}; 
   			\node at (0,-5) {$\alpha$};
   			\draw[dotted] (1)--(1bis);
   			\draw[dotted] (alpha1)--(alpha4);
   			\draw[dotted] (alpha2)--(alpha3);
   			\node at (4,-1.25) {(0)};
   			\node at (1,-3.75) {(0)};
   		\end{tikzpicture}\shorthandon{:}
	\end{pmatrix}, Q_m = 
	\frac{1}{2h} \begin{pmatrix}
   		\shorthandoff{:}\begin{tikzpicture}[scale=0.75]
   			\node(0) at (0,0) {0};
   			\node(a1) at (0.75,0) {$a$};
   			\node(b1) at (1.5,0) {$\frac{b}{2}$};
   			\node(a2) at (0,-0.75) {$-a$};
   			\node(b2) at (0,-1.5) {$-\frac{b}{2}$};
   			\node(0bis) at (7.5,-7.5) {0};
   			\node(a3) at (7.5,-6.75) {$a$};
   			\node(b3) at (7.5,-6) {$\frac{b}{2}$};
   			\node(a4) at (6.75,-7.5) {$-a$};
   			\node(b4) at (6,-7.5) {$-\frac{b}{2}$};
   			\node at (7.5,0) {$-a$};
   			\node at (6.75,0) {$-\frac{b}{2}$};
   			\node at (7.5,-0.75) {$-\frac{b}{2}$};
   			\node at (0,-7.5) {$a$};
   			\node at (0.75,-7.5) {$\frac{b}{2}$};
   			\node at (0,-6.75) {$\frac{b}{2}$};
   			\draw[dotted] (0.south east)--(0bis.north west);
   			\draw[dotted] (a1.south east)--(a3.north west);
   			\draw[dotted] (a2.south east)--(a4.north west);
   			\draw[dotted] (b1.south east)--(b3.north west);
   			\draw[dotted] (b2.south east)--(b4.north west);
   			\node at (5.75,-2) {(0)};
   			\node at (1.75,-5.5) {(0)};
   		\end{tikzpicture}\shorthandon{:}
   	\end{pmatrix}, \]
$F$ and $F'$ are vectors associated to the function and its derivative.
 
Then the matrix associated to the first derivative is $P_m^{-1} Q_m$. We emphasize that the coefficients in the upper right corner and the lower left corner correspond to the periodic bound conditions. Concerning other boundary conditions, which do not allow self centered scheme, we rather take an offcenter scheme. An admissible scheme for the first derivative is
\[ f_i' + \alpha f_{i+1}' = \frac{1}{h} \left( a f_i + b f_{i+1} + c f_{i+2} + d f_{i+3} \right), \]
allows to reach the fourth order.

In the same way, we write for the second derivative
\begin{equation*} 
	\alpha \left( f_{i-1}'' + f_{i+1}'' \right) + f_i'' = b \left( \frac{f_{i+2} - 2f_i + f_{i-2}}{4h^2} \right) + a \left( \frac{f_{i+1} - 2f_i + f_{i-1}}{h^2} \right),
\end{equation*}	
and the Taylor expansions provide
\begin{itemize}
	\item order 2  
		\[ 1+2\alpha = a+b. \]
	\item order $m \in \{ 4, 6 \}$
		\begin{align*}
			& 1+2\alpha = a+b, \\
			& \frac{2\alpha}{(2l)!} = 2 \frac{4^{l}b+a}{(2l+2)!}, \quad l=1 \ldots \frac{m}{2}-1.
		\end{align*}
\end{itemize}

The following scheme to discretize the third derivative
\begin{equation*} 
	\alpha \left( f_{i-1}^{(3)}  + f_{i+1}^{(3)}  \right) + f_i^{(3)} = a \left( \frac{f_{i+2} - f_{i-2} -2(f_{i+1}-f_{i-1})}{2h^3} \right) + b \left( \frac{f_{i+3} - f_{i-3} -3(f_{i+1}-f_{i-1})}{8h^3} \right)	
\end{equation*}
gives
\begin{itemize}
	\item an order 2 scheme if
		\[ 1+2\alpha+2\beta = a+b+c. \]
	\item and an order $m \in \{ 4,6 \}$ if
		\begin{align*}
			& 1+2\alpha = a+b, \\
			& \frac{2\alpha}{(2l)!} = \left[ 2a \left( 2^{2(l+1)} - 1 \right) + \frac{3}{4}b \left( 3^{2(l+1)} - 1 \right) \right] \frac{1}{(2l+3)!}, \quad l=1 \ldots \frac{m}{2}-1.
		\end{align*}
\end{itemize}		

It remains to compute the discretization of the antiderivative $\partial_x^{-1}$, which is not immediate. The matrix $P_m^{-1} Q_m$ associated with the operator $\partial_x$ as above beeing not invertible in general. More precisely, we have $P_m F' = Q_m F$ where $F$ and $Q_m$ is not invertible. To close the system, we use the mass zero constraint of the KP equation \cite{MST07}. Hence, we impose $\sum F_i = 0$ and $\sum F_i' = 0$. Then the last line of $Q_m$ and $P_m$ are replaced with 1 to obtain two new matrices $\overline{Q_m}$ and $\overline{P_m}$

\[ \overline{P_m} = 
	\begin{pmatrix}
   		\shorthandoff{:}\begin{tikzpicture}[scale=0.75]
   			\node(1) at (0,0) {1};
   			\node(2) at (0,-0.5) {$\alpha$};
   			\node(3) at (0.5,0) {$\alpha$};
   			\node(1bis) at (4.5,-4.5) {1};
   			\node(3bis) at (5,-4.5) {$\alpha$};
   			\node(2bis) at (4,-4.5) {$\alpha$};
   			\node(l1) at (0,-5) {1};
   			\node(l2) at (5,-5) {1};
   			\draw[dotted] (1)--(1bis);
   			\draw[dotted] (2)--(2bis);
   			\draw[dotted] (3)--(3bis);
   			\draw[dotted] (l1)--(l2);
   			\node at (4,-1.25) {(0)};
   			\node at (1,-3.75) {(0)};
   		\end{tikzpicture}\shorthandon{:}
	\end{pmatrix}, \overline{Q_m} = 
	\frac{1}{2h} \begin{pmatrix}
   		\shorthandoff{:}\begin{tikzpicture}[scale=0.75]
   			\node(0) at (0,0) {0};
   			\node(a1) at (0.75,0) {$a$};
   			\node(b1) at (1.5,0) {$\frac{b}{2}$};
   			\node(a2) at (0,-0.75) {$-a$};
   			\node(b2) at (0,-1.5) {$-\frac{b}{2}$};
   			\node(0bis) at (6.75,-6.75) {0};
   			\node(a3) at (7.5,-6.75) {$a$};
   			\node(b3) at (7.5,-6) {$\frac{b}{2}$};
   			\node(a4) at (6,-6.75) {$-a$};
   			\node(b4) at (5.25,-6.75) {$-\frac{b}{2}$};
   			\node at (7.5,0) {$-a$};
   			\node at (6.75,0) {$-\frac{b}{2}$};
   			\node at (7.5,-0.75) {$-\frac{b}{2}$};
   			\node at (0,-6.75) {$\frac{b}{2}$};
   			\node(l1) at (0,-7.5) {1};
   			\node(l2) at (7.5,-7.5) {1};
   			\draw[dotted] (0.south east)--(0bis.north west);
   			\draw[dotted] (a1.south east)--(a3.north west);
   			\draw[dotted] (a2.south east)--(a4.north west);
   			\draw[dotted] (b1.south east)--(b3.north west);
   			\draw[dotted] (b2.south east)--(b4.north west);
   			\draw[dotted] (l1)--(l2);
   			\node at (5.75,-2) {(0)};
   			\node at (1.75,-5.5) {(0)};
   		\end{tikzpicture}\shorthandon{:}
   	\end{pmatrix}. \]
   	
This new matrix $\overline{Q_m}$ becomes invertible (when $N$ is odd) and 
\[ F = \left( \overline{Q_m} \right)^{-1} \overline{P_m} F', \quad \left( \overline{D_x} \right)^{-1} = \left( \overline{Q_m} \right)^{-1} \overline{P_m}. \]

\begin{remark}
	If $N$ is even, if $C_i$ denote the columns of $\overline{Q_m}$, the operation $C_0-C_1+C_2-C_3 \cdots$ is null and $\overline{Q_m}$ is not invertible.
\end{remark} 

Gathering $D_x$, $D_{yy}$, $D_{xxx}$ and $\left( \overline{D_x} \right)^{-1}$, the discretizations of $\partial_x$, $\partial_{yy}$, $\partial_{xxx}$ and $\partial_x^{-1}$ respectively, we obtain the nonlinear system

\begin{multline}\label{schema_SS_SC}
	\frac{U_{n+1}-U_{n}}{\Delta t} + I_y \otimes D_{xxx} \frac{U_{n+1}+U_{n}}{2} +\\
	\frac{1}{p+1} I_y \otimes D_x \left( \left( \frac{U_{n+1}+U_{n}}{2} \right)^{p+1} \right) + \lambda\left( I_y \otimes \left( \overline{D_x} \right)^{-1} \right) \left( D_{yy} \otimes I_x \right) \left( \frac{U_{n+1}+U_{n}}{2} \right) = 0.
\end{multline}
It can be rewritten
\begin{multline*}
	\left[ I_y \otimes \left( I_x + \frac{\Delta t}{2} D_{xxx} \right) + \lambda \frac{\Delta t}{2} D_{yy} \otimes \left( \overline{D_x} \right)^{-1} \right] U_{n+1} = \\
	\left[ I_y \otimes \left( I_x - \frac{\Delta t}{2} D_{xxx} \right) - \lambda \frac{\Delta t}{2} D_{yy} \otimes \left( \overline{D_x} \right)^{-1} \right] U_n - \frac{\Delta t}{p+1} I_y \otimes D_x \left( \left( \frac{U_{n+1}+U_n}{2} \right)^{p+1} \right). 
\end{multline*}	

A Picard fixed-point method is used at each time iteration to compute $U_{n+1}$.

\begin{Thm}
	The scheme \eqref{schema_SS_SC} preserves the mean and the mass zero constraint.
\end{Thm}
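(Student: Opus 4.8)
The plan is to transcribe at the discrete level the two manipulations used in the proof of the first theorem of this section (integrate the equation, and integrate it only in $x$); the whole content is to check that the discrete operators behave correctly under summation. Write $\mathbf{1}_x\in\R^{N+1}$ and $\mathbf{1}_y$ for the all-ones vectors, and set $S_x:=I_y\otimes\mathbf{1}_x^{T}$ (summation in $x$ only, leaving a vector indexed by $y$) and $S:=\mathbf{1}_y^{T}\otimes\mathbf{1}_x^{T}$ (total sum).

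The first step, and the only one requiring any computation, is the set of identities
\[ \mathbf{1}_x^{T}D_x=0,\qquad \mathbf{1}_x^{T}D_{xxx}=0,\qquad \mathbf{1}_y^{T}D_{yy}=0,\qquad \mathbf{1}_x^{T}\left(\overline{D_x}\right)^{-1}=\mathbf{1}_x^{T}. \]
For the first three: in each compact stencil the matrix $Q_m$ has vanishing column sums, since the right-hand side stencils are (anti)symmetric — e.g. the entries $a,-a,\tfrac b2,-\tfrac b2$ of a column of the first-derivative $Q_m$ cancel, and likewise for the second and third derivative — while $P_m$ has constant, nonzero column sums, $\mathbf{1}^{T}P_m=\sigma\,\mathbf{1}^{T}$; hence $\mathbf{1}^{T}P_m^{-1}=\sigma^{-1}\mathbf{1}^{T}$ and $\mathbf{1}^{T}P_m^{-1}Q_m=0$. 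For the antiderivative, recall $\left(\overline{D_x}\right)^{-1}=\left(\overline{Q_m}\right)^{-1}\overline{P_m}$ and that the last rows of both $\overline{Q_m}$ and $\overline{P_m}$ equal $\mathbf{1}_x^{T}$; writing $e_N$ for the last canonical vector, $\mathbf{1}_x^{T}=e_N^{T}\overline{Q_m}$ gives $\mathbf{1}_x^{T}\left(\overline{Q_m}\right)^{-1}=e_N^{T}$, whence $\mathbf{1}_x^{T}\left(\overline{D_x}\right)^{-1}=e_N^{T}\overline{P_m}=\mathbf{1}_x^{T}$.

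For conservation of the mean I would apply $S$ to \eqref{schema_SS_SC}: the third-derivative and nonlinear terms vanish because $(\mathbf{1}_y^{T}\otimes\mathbf{1}_x^{T})(I_y\otimes M)=\mathbf{1}_y^{T}\otimes(\mathbf{1}_x^{T}M)=0$ for $M\in\{D_{xxx},D_x\}$, and the antiderivative term vanishes because $S\,(I_y\otimes(\overline{D_x})^{-1})(D_{yy}\otimes I_x)=(\mathbf{1}_y^{T}D_{yy})\otimes\mathbf{1}_x^{T}=0$; only the discrete time derivative remains, so $SU_{n+1}=SU_n$. For the mass-zero constraint I would instead apply $S_x$: the same two terms drop out, while the last identity of the first step gives $S_x\,(I_y\otimes(\overline{D_x})^{-1})(D_{yy}\otimes I_x)=D_{yy}\otimes\mathbf{1}_x^{T}=D_{yy}\,S_x$, so that $S_xU_n$ obeys the closed recursion
\[ \Bigl(\tfrac1{\Delta t}I_y+\tfrac{\lambda}{2}D_{yy}\Bigr)S_xU_{n+1}=\Bigl(\tfrac1{\Delta t}I_y-\tfrac{\lambda}{2}D_{yy}\Bigr)S_xU_n. \]
Hence $S_xU_0=0$ forces $S_xU_n=0$ for every $n$, which is the discrete mass-zero constraint (and is also precisely what keeps the discrete antiderivative consistent at each step).

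The argument is essentially bookkeeping, and the two points that deserve care are the tensor-product identities above, which must be written out rather than merely asserted, and, in the last recursion, the invertibility of $\tfrac1{\Delta t}I_y+\tfrac{\lambda}{2}D_{yy}$ — automatic for KP-I ($\lambda=-1$, since $D_{yy}$ is negative semidefinite) and valid for KP-II for $\Delta t$ small enough. One should also record that the Picard iteration is used precisely to solve \eqref{schema_SS_SC}, so the identity \eqref{schema_SS_SC}, and therefore everything above, holds exactly for the computed $U_{n+1}$.
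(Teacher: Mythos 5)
Your proof is correct, and for the conservation of the mean it is the paper's own argument recast in cleaner language: the paper shows, for each operator $M=P_m^{-1}Q_m$, that $\sum_i(MX)_i=0$ by summing $P_mY=Q_mX$ and using that $Q_m$ has zero column sums while $P_m$ has constant nonzero column sums; your identities $\mathbf{1}^{T}P_m^{-1}Q_m=0$ are exactly that, stated as left null vectors, and the Kronecker bookkeeping with $S=\mathbf{1}_y^{T}\otimes\mathbf{1}_x^{T}$ matches the paper's use of $(I_y\otimes(\overline{D_x})^{-1})(D_{yy}\otimes I_x)=D_{yy}\otimes(\overline{D_x})^{-1}$. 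Where you genuinely go beyond the paper is the mass-zero constraint: the published proof only verifies the total sum $\sum_i(U_{n+1})_i=\sum_i(U_n)_i$ and never returns to the per-$y$ constraint announced in the statement, whereas your application of $S_x=I_y\otimes\mathbf{1}_x^{T}$ yields the closed Crank--Nicolson recursion for $S_xU_n$ and the implication $S_xU_0=0\Rightarrow S_xU_n=0$, together with the (correctly flagged) invertibility requirement on $\tfrac1{\Delta t}I_y+\tfrac{\lambda}{2}D_{yy}$. Two small caveats: with the paper's normalization the all-ones last row of $\overline{Q_m}$ sits inside the prefactor $\tfrac1{2h}$, so your identity reads $\mathbf{1}_x^{T}(\overline{D_x})^{-1}=2h\,\mathbf{1}_x^{T}$; this only rescales $D_{yy}$ in the recursion and changes nothing. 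And the claim that $D_{yy}=P_2^{-1}Q_2$ is negative semidefinite is loose (it is not symmetric), though invertibility for small $\Delta t$ is all you actually need.
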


\begin{proof}
	We verify that ${\displaystyle \sum_{i} \left( U_{n+1} \right)_i = \sum_{i} \left( U_{n} \right)_i }$. We write the 1D discretization matrices as
	 \[ D_x = P_1^{-1} Q_1,\ D_{yy} = P_2^{-1} Q_2,\ D_{xxx} = P_3^{-1} Q_3 \mbox{~and~} \left( \overline{D_x} \right)^{-1} =\left( \overline{Q_1} \right)^{-1} \overline{P_1}.\] 
	
	First ${\displaystyle \sum_i \left( I_y \otimes D_{xxx} \frac{U_{n+1}+U_{n}}{2} \right)_i = 0 }$. Indeed, the matrix $Q_3$ is antisymmetric, $P_3 $ is symmetric and $\forall j$
	\[ \sum_i \left( Q_3 \right)_{i,j} = 0, \ \sum_i \left( P_3 \right)_{i,j} = C, \quad C \neq 0. \]
	Let us verify that ${\displaystyle \sum_i \left( D_{xxx} X \right)_i = 0 }$ for any vector $X$. We set ${\displaystyle Y = D_{xxx} X = P_3^{-1} Q_3 X }$, thus $P_3 Y = Q_3 X$.
	We have
	\[ \sum_i \left( Q_3 X \right)_i = \sum_i \sum_k \left[ \left( Q_3 \right)_{i,k} X_k \right] = \sum_k \left[ \sum_i \left( Q_3 \right)_{i,k} \right] X_k = 0, \]
	and
	\[ \sum_i \left( P_3 Y \right)_i = \sum_i \sum_k \left[ \left( P_3 \right)_{i,k} Y_k \right] = \sum_k \left[ \sum_i \left( P_3 \right)_{i,k} \right] Y_k = C \sum_k Y_k. \]
	We infer that 
	\[ \sum_k Y_k = \sum_i \left( P_3^{-1} Q_3 X \right)_i = \sum_i \left( D_{xxx} X \right)_i = 0. \]
	We therefore obtain ${\displaystyle \sum_i \left( I_y \otimes D_{xxx} \frac{U_{n+1}+U_{n}}{2} \right)_i = 0 }$.
	
	In the same way, we have ${\displaystyle \sum_i \left[ \left( I_y \otimes D_x \right) X \right]_i = 0}$ and ${\displaystyle \sum_i \left[ \left( D_{yy} \otimes I_x \right) X \right]_i = 0}$ for any vector $X$. Since 
	\[ {\displaystyle \left( I_y \otimes \left( \overline{D_x} \right)^{-1} \right) \left( D_{yy} \otimes I_x \right) = D_{yy} \otimes \left( \overline{D_x} \right)^{-1} = \left( D_{yy} \otimes I_x \right) \left( I_y \otimes \left( \overline{D_x} \right)^{-1} \right)},\]
	\eqref{schema_SS_SC} implies ${\displaystyle \sum_{i} \left( U_{n+1} \right)_i = \sum_{i} \left( U_{n} \right)_i }$.
\end{proof}

The conservation of the $L^2$-norm 
\[ \left\langle I_y \otimes D_x \left( \frac{U_{n+1} + U_n}{2} \right)^{p+1}, \ \frac{U_{n+1} + U_n}{2} 		\right\rangle = 0 \]
is more difficult to reach with high order schemes. Nevertheless, the precision of compact schemes will ensure a good conservation of the $L^2$-norm, as shown in the numerical simulations.

\begin{remark}\label{rem_syst_lin}
	In order to solve the linear system $AX=b$, where
	\[ A = I_y \otimes \left( I_x + \frac{\Delta t}{2} D_{xxx} \right) + \lambda \frac{\Delta t}{2} \left( D_{yy} \otimes \left( \overline{D_x} \right)^{-1} \right), \]
	we use a diagonal preconditioner and a GMRES method.
\end{remark}

\subsubsection{Fourier spectral method}	

Because of the definition of the antiderivative ${\displaystyle \partial_x^{-1} = \mathscr{F}^{-1} \left( \frac{1}{i\xi} \right)}$, the Fast Fourier Transform (FFT) is the most common way to solve the KP equation with periodic boundary conditions in space.
We define the Fourier modes $k_x = \dfrac{2\pi k}{L_x}$ and $k_y = \dfrac{2\pi l}{L_y}$ in the domain $[-L_x, L_x] \times [-L_y, L_y]$.
The Fourier transform applied to the KP equation,gives $\forall k \in \Z^*,\ l \in \Z$
\[ \frac{d \hat{u}}{\Delta t}(k,l,t) - i \left[ k_x^3 - \lambda \frac{k_y^2}{k_x} \right] \hat{u}(k,l,t) + \frac{ik_x}{p+1} \mathscr{F}\left( u^{p+1} \right)(k,l,t) = 0. \]
The mass zero constraint is translated as $\hat{u}(0,l,t)=0$.

In order to conserve the $L^2$-norm, a Sanz-Serna scheme is used again
\begin{equation}\label{fft1}
	\frac{\hat{u}_{n+1}-\hat{u}_{n}}{\Delta t} - i \left[ k_x^3 - \lambda \frac{k_y^2}{k_x} \right] \frac{\hat{u}_{n+1}+\hat{u}_{n}}{2} + i\frac{k_x}{p+1}\mathscr{F}\left( \left( \frac{u_n + u_{n+1}}{2} \right)^{p+1} \right) = 0. 
\end{equation}

With the following matrices (where $N_x$ and $N_y$ are powers of 2)
\begin{align*} 
	K_{\star} = & \frac{2\pi}{L_{\star}}\mbox{diag}\left( 0, \ 1, \ \cdots, \ \frac{N_{\star}}{2}-1, \ -\frac{N_{\star}}{2}, \ \cdots, \ -1 \right), \quad \star = x \mbox{~or~} y,  \\
	\left( \overline{K_x} \right)^{-1} = & \frac{2\pi}{L_x}\mbox{diag}\left( 0, \ 1, \ \cdots, \ \frac{1}{\frac{N_x}{2}-1}, \ -\frac{2}{N_x}, \ \cdots, \ -1 \right),  \\
\end{align*}
we solve thanks to a Picard fixed-point method
\begin{equation}\label{fft2}
	\frac{\hat{U}_{n+1}-\hat{U}_{n}}{\Delta t} - i \left[ I_y \otimes K_x^3 - \lambda K_y^2 \otimes \left( \overline{K_x} \right)^{-1} \right] \frac{\hat{U}_{n+1}+\hat{U}_{n}}{2} + i\left( I_y \otimes \left( \overline{K_x} \right)^{-1} \right) \frac{1}{p+1}\mathscr{F}\left( \left( \frac{U_{n+1}+U_{n}}{2} \right)^{p+1} \right)= 0.
\end{equation}
		
It is worth to notice that the matrix $K_x$ is not invertible. 
The average in $x$ beeing zero, we can define the matrix $\left( \overline{K_x} \right)^{-1}$.

\subsubsection{Mixed spectral-compact schemes}	 

The wave, solution of the KP equation, propagates mainly in the $x$-direction. For realistic configurations or less consuming computational, it is natural to consider a large domain in the $x$-direction and to restrict the spatial domain in the $y$-direction. In this case, we combine FFT in the $x$-direction and compact schemes with Dirichlet and/or Neumann condition in $y$ as follows

\begin{multline*}
	\frac{\hat{U}_{n+1} - \hat{U}_n}{\Delta t} - i \left( I_y \otimes K^3 \right) \frac{\hat{U}_{n+1} + \hat{U}_n}{2} \\
	+ \frac{i}{p+1} \left(I_y \otimes K \right) \mathscr{F} \left( \left( \frac{U_n + U_{n+1}}{2} \right)^{p+1} \right) - i\lambda \left( D_{yy} \otimes K^{-1} \right) \frac{\hat{U}_{n+1} + \hat{U}_n}{2} = 0. 
\end{multline*}
Gathering $\hat{U}^{n+1}$ and $\hat{U}^n$ gives

\begin{multline*}
	\left[ I_d - i\frac{\Delta t}{2} \left( I_y \otimes K^3 + \lambda \Delta t D_{yy} \otimes K^{-1} \right) \right] \hat{U}_{n+1} = \\
	\left[ I_d + i\frac{\Delta t}{2} \left( I_y \otimes K^3 + \lambda \Delta t D_{yy} \otimes K^{-1} \right) \right] \hat{U}_{n} + i\frac{\Delta t}{p+1} \left(I_y \otimes K \right) \mathscr{F} \left( \left( \frac{U_n + U_{n+1}}{2} \right)^{p+1} \right).
\end{multline*}


\section{Numerical results}
	After establishing the convergence, the schemes are tested and validated according to three well known behaviors of the KP equations, namely, the perturbations of Zaitsev soliton, transverse instabilities and blow-up in finite time \cite{KS10}. The codes are written in python.

\subsection{Order estimates and comparison}

We consider the Zaitsev travelling waves as exact solution the KP-I equation with $p=1$. It is defined by
\begin{equation}\label{Zaitsev_soliton}
	\Psi(x,y,t) = 12 \alpha^2 \frac{1 - \beta \cosh (\alpha x - \omega t) \cos (\delta y)}{\left( \cosh (\alpha x - \omega t) - \beta \cos (\delta y) \right)^2}, 
\end{equation}
where
\[ \beta = \sqrt{\frac{\delta^2 - 3 \alpha ^4}{\delta ^2}}, \ 3\alpha^4 < \delta^2, \ \omega = \frac{\delta^2 + \alpha^4}{\alpha}, \ c = \frac{\omega}{\alpha}. \]
The computational domain is $\Omega = [-L_x;L_x] \times [-L_y;L_y] = [-89.6;89.6] \times [-21;21]$ and we take $\alpha = 0.0174$, $\delta = \frac{\pi}{L_y} = \frac{\pi}{21}$ and $c = 0.76$ \cite{Ha01}, and the time step $dt = 10^{-4}$. The initial datum is represented in Figure \ref{zaitsev_coupe}.

\begin{figure}[H]
	\centering
	\includegraphics[scale=0.4]{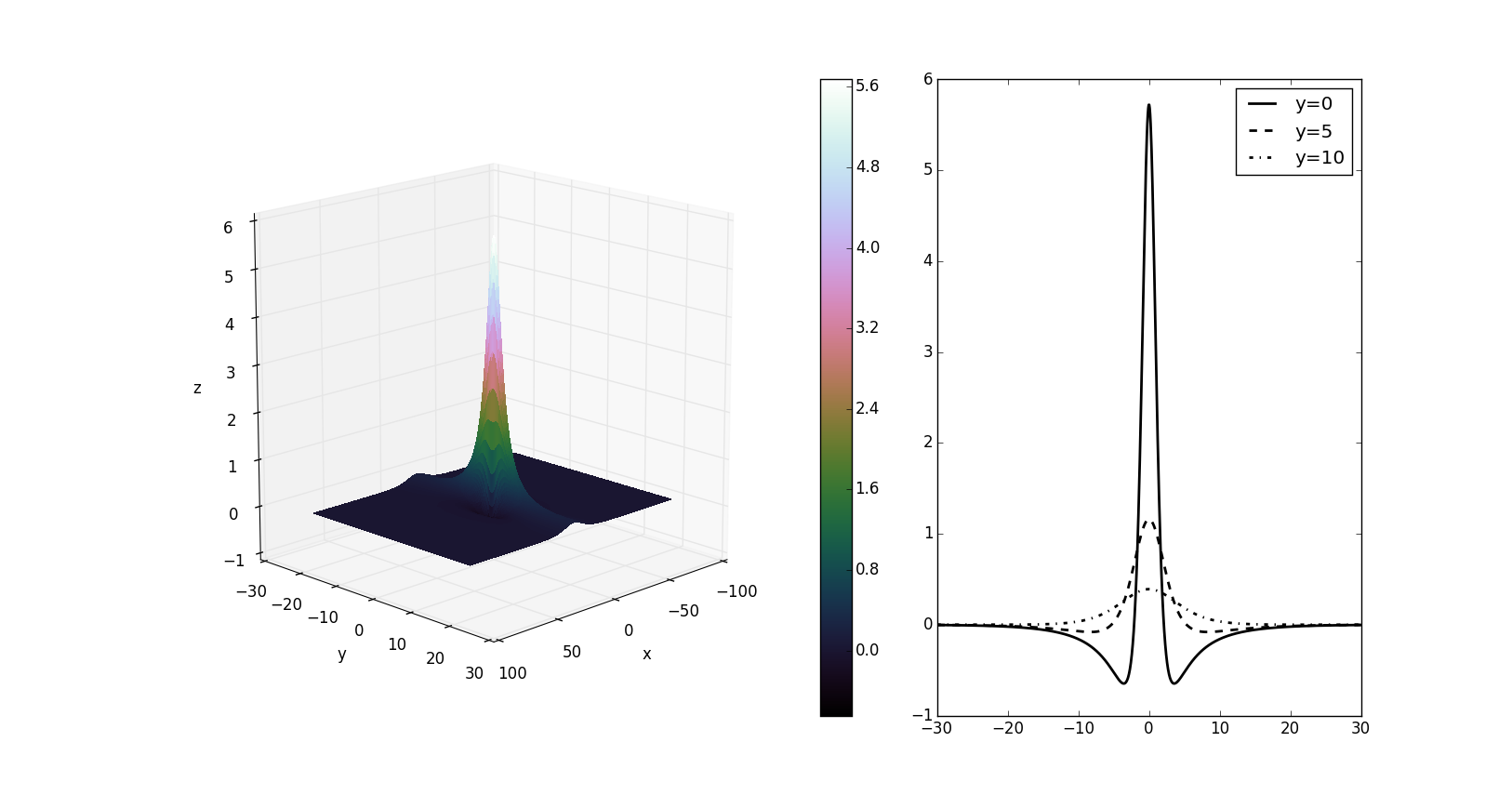}
	\caption{At left, 3-dimensional representation of the Zaitsev travelling wave. At right, 1-dimensional projection for $y=$0 (-), 5 (- -), 10 (-$\cdot$-).}
	\label{zaitsev_coupe}
\end{figure}

We compare the $L^2$-norm error obtained by the schemes presented in section \ref{sec:num_scheme} with varying space discretization. The results are listed in Table \ref{tab_norm_L2}. It is not surprising that the classical Fourier scheme gives the best result. Nevertheless, the error given  by compact scheme is well decreasing and the results are quite good from the fourth order.

\begin{center}
\tablehead{\hline Method & $N_x$ & $N_y$ & $L^2$-norm error  \\}
\tabletail{\hline}
\bottomcaption{\label{tab_norm_L2} $L^2$-norm error for different settings}
\begin{supertabular}{|c|c|c|c|}
	\hline
	full FFT & $2^9$ & 200 & $7.74.10^{-7}$\\
	\hline
	full 2$^{nd}$ order compact schemes & 601 & 160 & $9.94.10^{-2}$\\
	\hline
	full 4$^{th}$ order compact schemes & 601 & 160 & $1.45.10^{-2}$\\
	\hline
	full 6$^{th}$ order compact schemes & 601 & 160 & $5.03.10^{-4}$\\
	\hline
	\multirow{3}{*}{FFT in $x$ and 2$^{nd}$ order compact scheme in $y$} & \multirow{3}{*}{$2^9$} & 100 & $5.35.10^{-2}$\\
	\cline{3-4}
	& & 150 & $2.37.10^{-2}$\\
	\cline{3-4}
	& & 200 & $1.33.10^{-2}$\\
	\hline
	\multirow{3}{*}{FFT in $x$ and 4$^{th}$ order compact scheme in $y$} & \multirow{3}{*}{$2^9$} & 100 & $9.38.10^{-4}$\\
	\cline{3-4}
	& & 150 & $1.92.10^{-4}$\\
	\cline{3-4}
	& & 200 & $6.04.10^{-5}$\\
	\hline
	\multirow{3}{*}{FFT in $x$ and 6$^{th}$ order compact scheme in $y$} & \multirow{3}{*}{$2^9$} & 100 & $4.59.10^{-5}$\\
	\cline{3-4}
	& & 150 & $3.98.10^{-6}$\\
	\cline{3-4}
	& & 200 & $1.03.10^{-6}$\\
\end{supertabular}
\end{center}

Figure \ref{pente_mixte} presents the numerical order obtained from mixed schemes with respect to $N_y$.

\begin{figure}[H]
	\centering
	\includegraphics[scale=0.4]{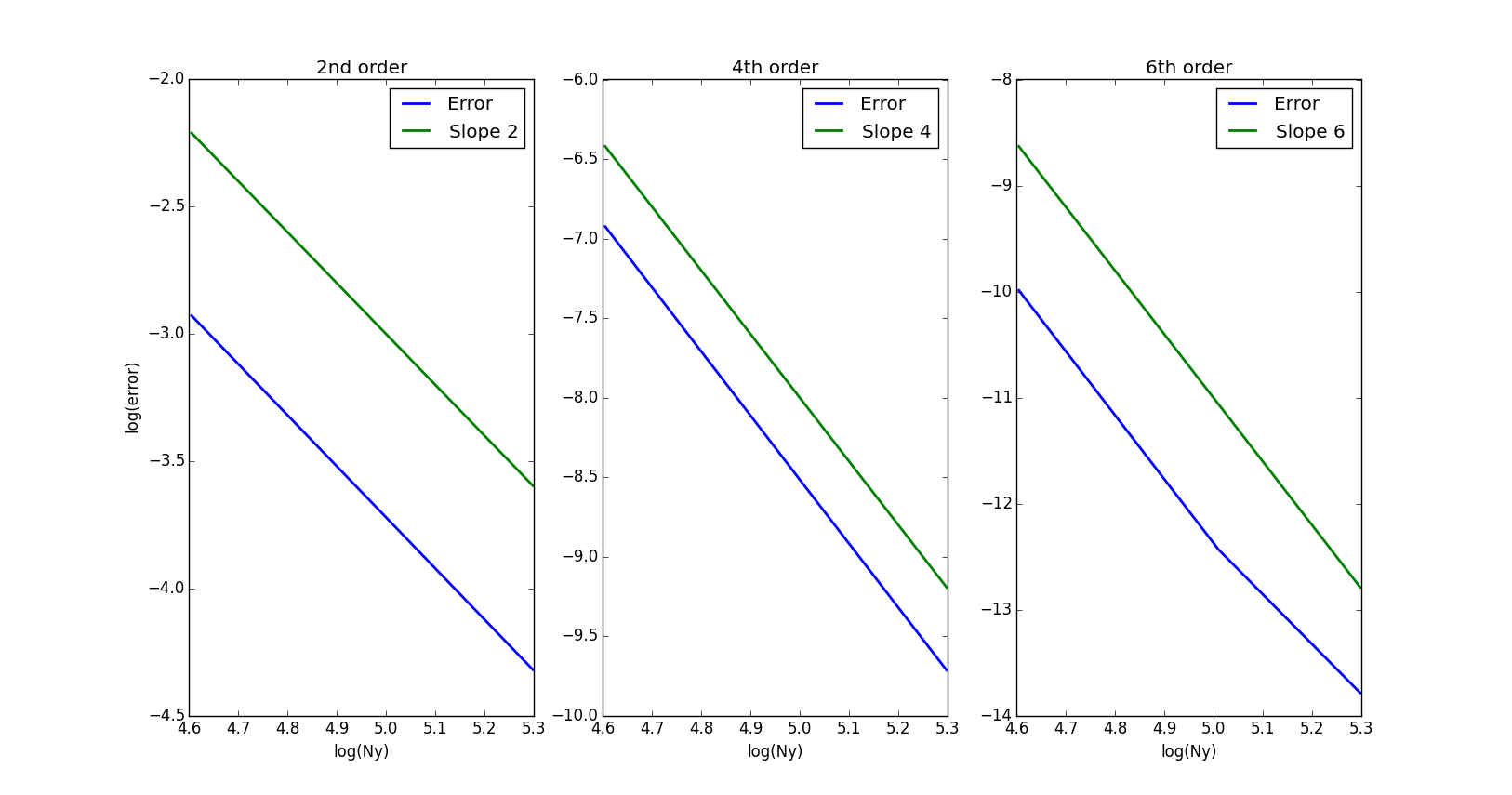}
	\caption{$L^2$-norm error with respect to the discretization step computed from mixed schemes. Here we have mixed schemes.}
	\label{pente_mixte}
\end{figure}

To verify the mass zero constraint, we choose as initial datum (Figure \ref{gaussienne_init_norm})
\[ u_0(x,y) = A(1-2sx^2) {\rm e}^{-sx^2-ty^2} \mbox{~with~} s=0.25, \ t=7.5 \mbox{~and~} A=5. \]
Here, the domain is $[-25, 25] \times [-5, 5]$ and $dt=10^{-4}$ and other parameters are sum up in Table \ref{settings_gaussienne}.
\begin{center}
\tablehead{\hline Method & $N_x$ & $N_y$\\}
\tabletail{\hline}
\bottomcaption{\label{settings_gaussienne} Parameters for a Gaussian as initial datum.}
\begin{supertabular}{|c|c|c|}
	\hline
	full 4$^{th}$ order compact scheme & 501 & 100 \\
	\hline
	FFT in $x$ and 4$^{th}$ order compact scheme in $y$ & $2^9$ & 100 \\
	\hline
	full FFT & $2^9$ & $2^7$ \\
\end{supertabular}
\end{center}

\begin{figure}[H]
	\centering
	\includegraphics[scale=0.43]{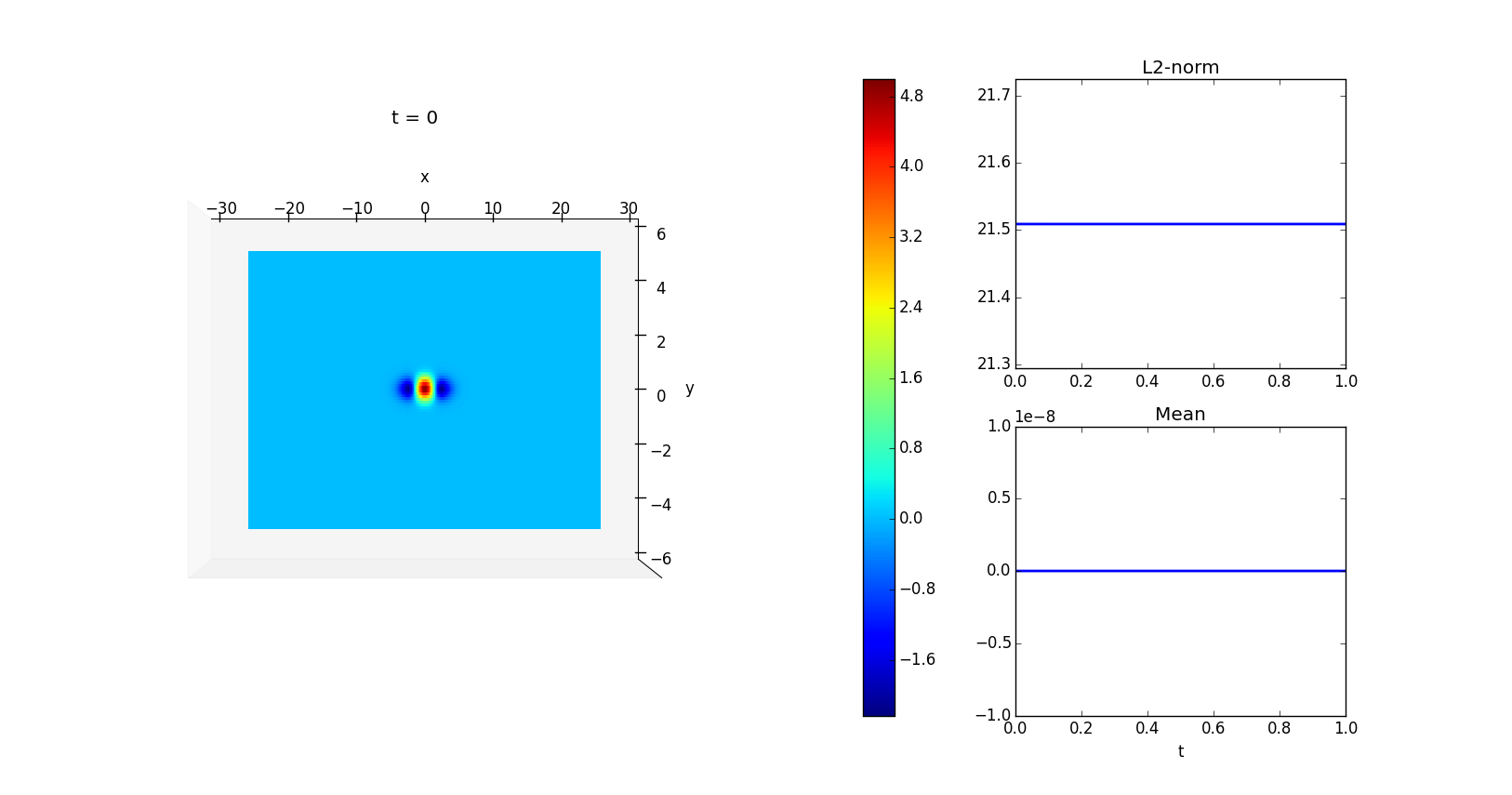}
	\caption{At left, the Gaussian initial datum with $A=5$, $s=0.25$ and $t=7.5$. At right, the time evolution of the $L^2$-norm and mean computed with the 4$^{th}$ order compact scheme.}
	\label{gaussienne_init_norm}
\end{figure}

\begin{figure}[H]
	\centering
	\includegraphics[scale=0.43]{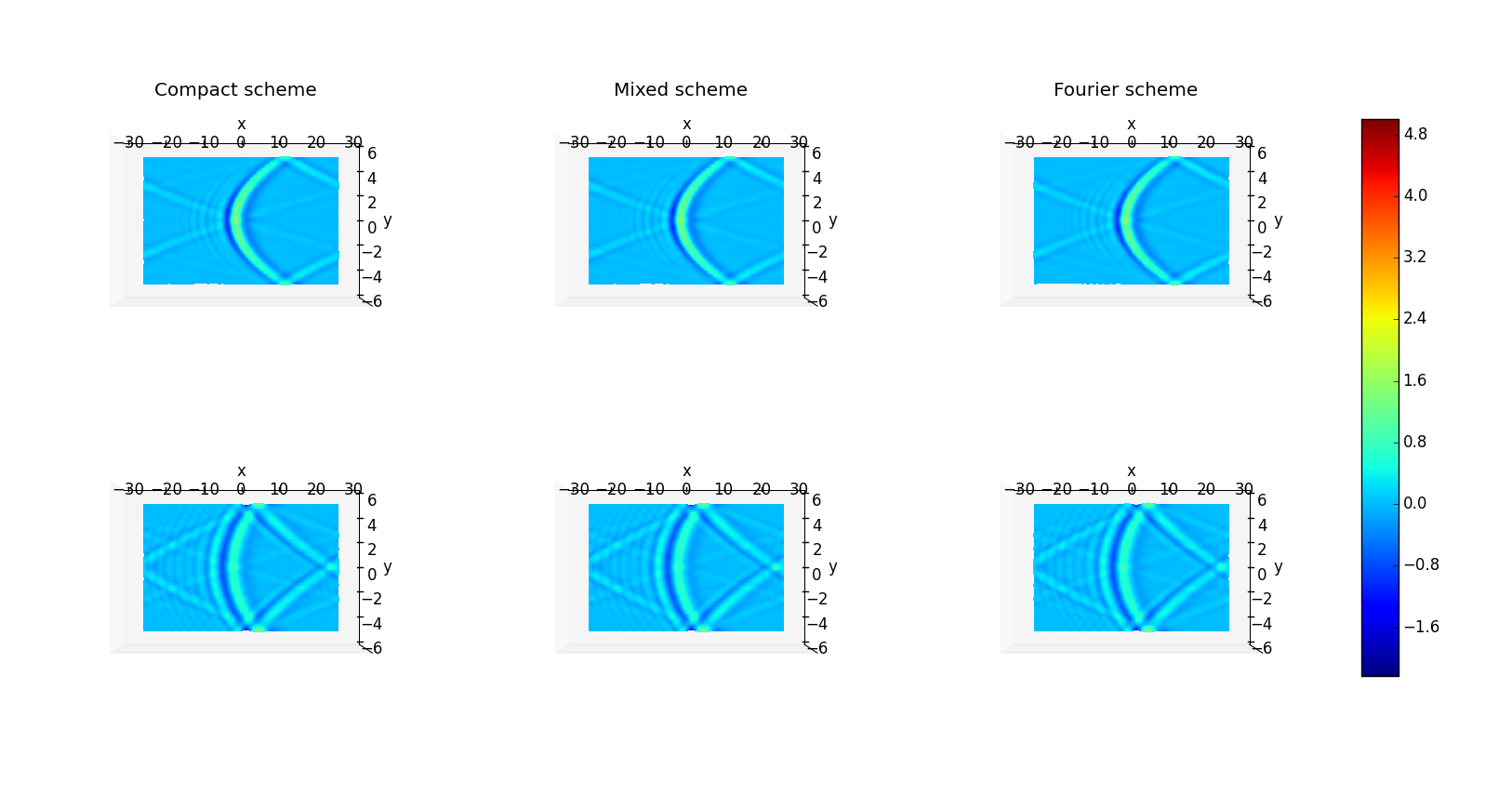}
	\caption{Solutions of KP-I equations at time $t=0.5$ (first row) and $t=1$ (second row). The first column denotes 4$^{th}$ order compact scheme, the second column is 4$^{th}$ order mixed scheme and third column represents the FFT.}
\end{figure}

\subsection{Perturbations of the Zaitsev soliton}

The Zaitsev soliton \eqref{Zaitsev_soliton} is perturbed with a Gaussian function. More precisely, the simulation starts from the following initial datum 
\[ u_0(x,y) = \Psi \left( x+\frac{L_x}{2},y,0 \right) + 6 \left( x+\frac{L_x}{2} \right) {\rm e}^{-\left( x+\frac{L_x}{2} \right)^2 - y^2}. \]
Here we take $\alpha=1$, $\beta=0.5$ and $\delta = 3$ in the domain $[-25,25] \times [-5,5]$. We present in Figure \ref{fig_pzaitsev_sol} the simulation performed with 4$^{th}$ order compact schemes with $N_x=501$, $N_y=100$ and $dt=10^{-4}$. The $L^2$-norm, the mean are well conserved and the $L^{\infty}$-norm has a consistent behaviour (Figure \ref{fig_pzaitsev_norms}). As expected \cite{KS10}, Figure \ref{fig_pzaitsev_sol} shows that peaks are appearing and the Zaitsev soliton develops into a lump.

\begin{figure}[H]
	\centering
	\includegraphics[scale=0.4]{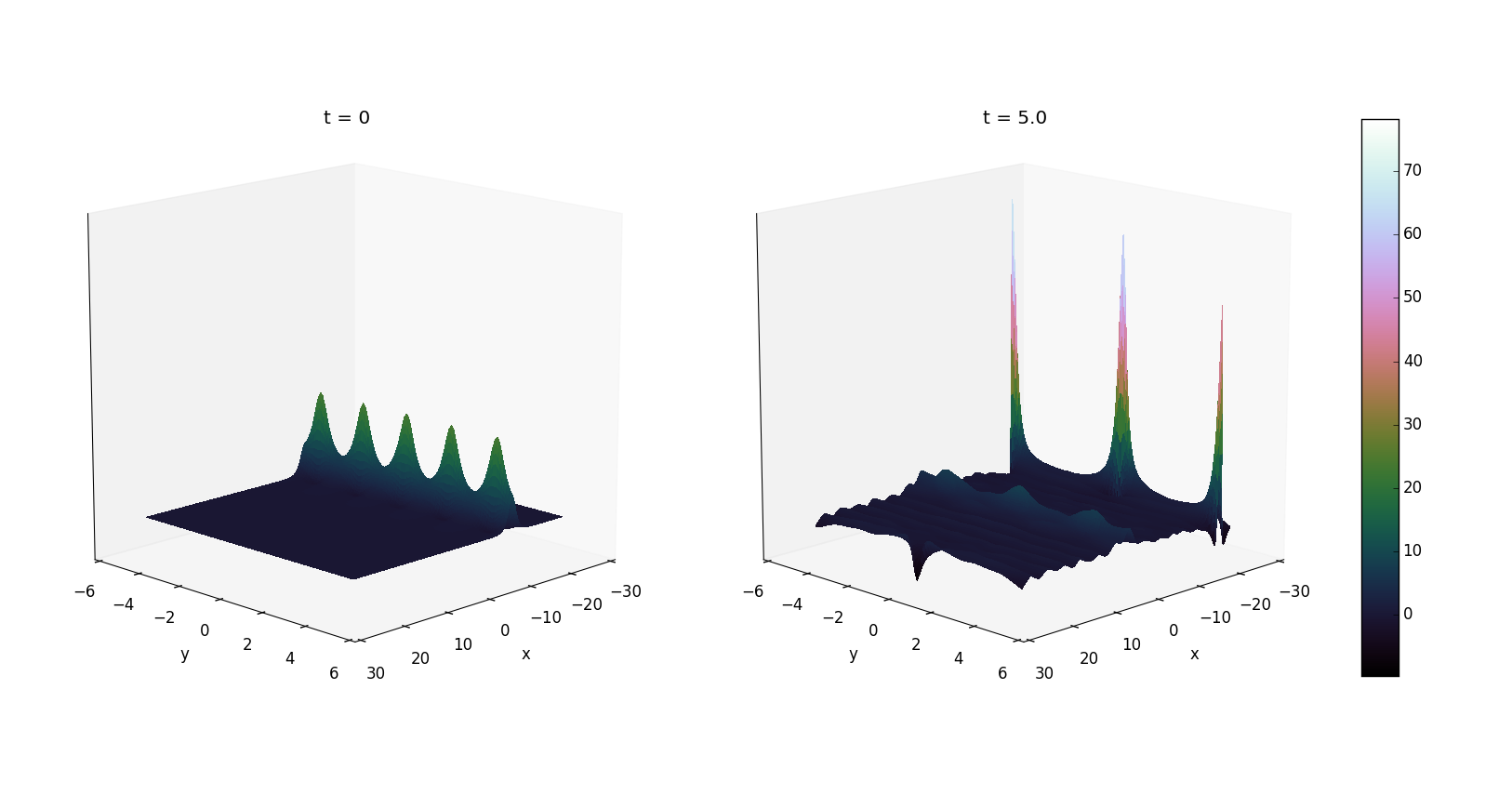}
	\caption{At left, perturbed Zaitsev soliton as initial datum. At right, solution of the KP-I equation with $p=1$ at time $t=5$.}
	\label{fig_pzaitsev_sol}
\end{figure}

\begin{figure}[H]
	\centering
	\includegraphics[scale=0.4]{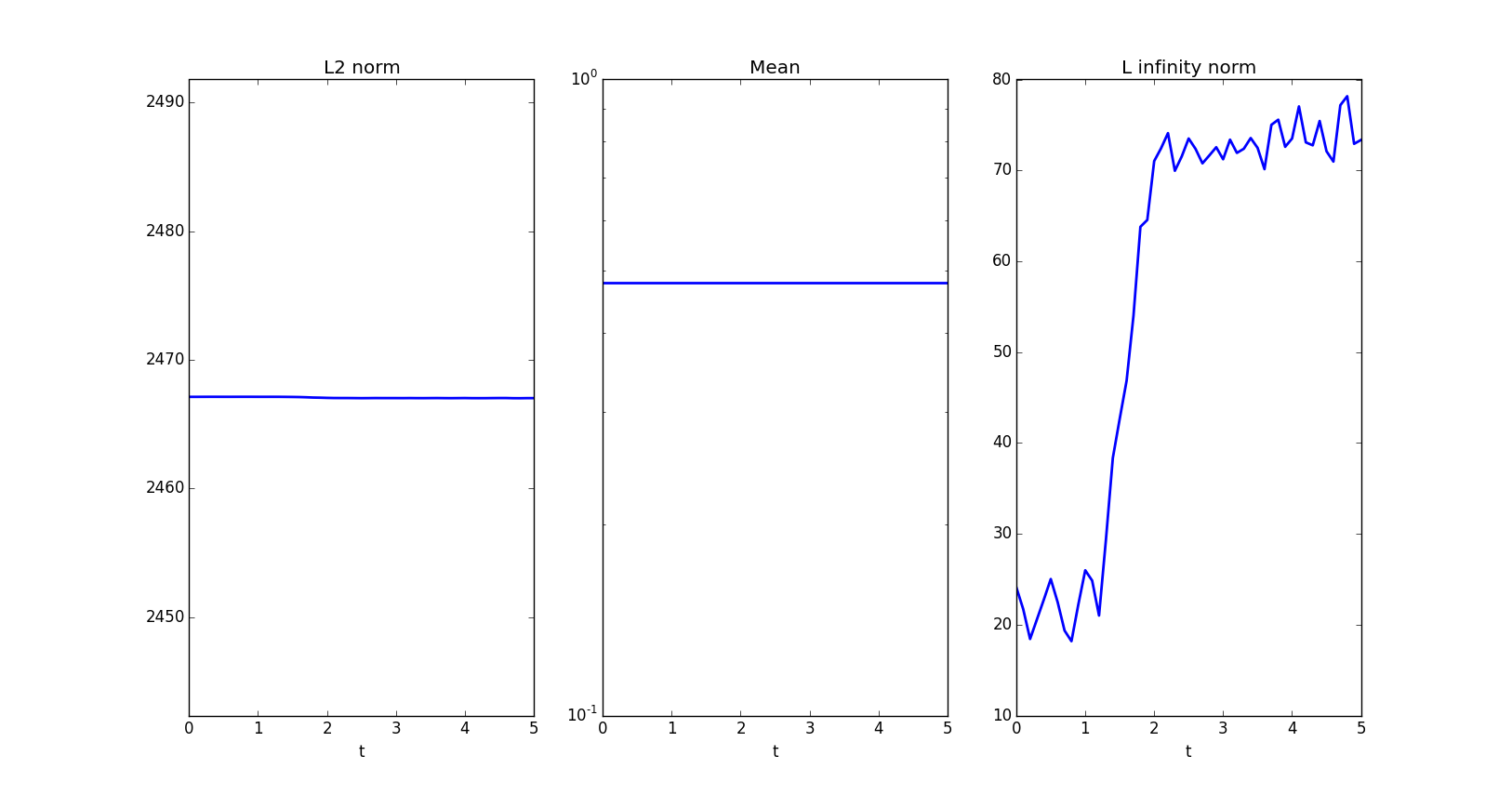}
	\caption{$L^2$-norm, mean and $L^{\infty}$-norm with respect to time of the solution of the KP-I equation starting from perturbed Zaitsev.}
	\label{fig_pzaitsev_norms}
\end{figure}

\subsection{Transverse instabilities of the line-soliton}

We investigate the transverse instabilities \cite{APS97, RT08, RT09, RT10, RT11, RT12} of the Korteweg-de Vries soliton. We consider the perturbed line-soliton
\[ u_0(x,y) = 12 \sech^2 \left( x + 0.4 \cos \left( \frac{2y}{Ly} \right) \right). \]
In Figure \ref{fig_pline_sol}, we use 4$^{th}$ order compact schemes with $N_x=501$, $N_y=100$ and $dt=10^{-4}$. In Figure \ref{fig_pline_norms}, we can observe that the norms are well conserved and that the $L^{\infty}$-norm is increasing, translating that the line-soliton is unstable as shown  in Figure \ref{fig_pline_sol} \cite{Ha01, HMM09, KS10}.

\begin{figure}[H]
	\centering
	\includegraphics[scale=0.4]{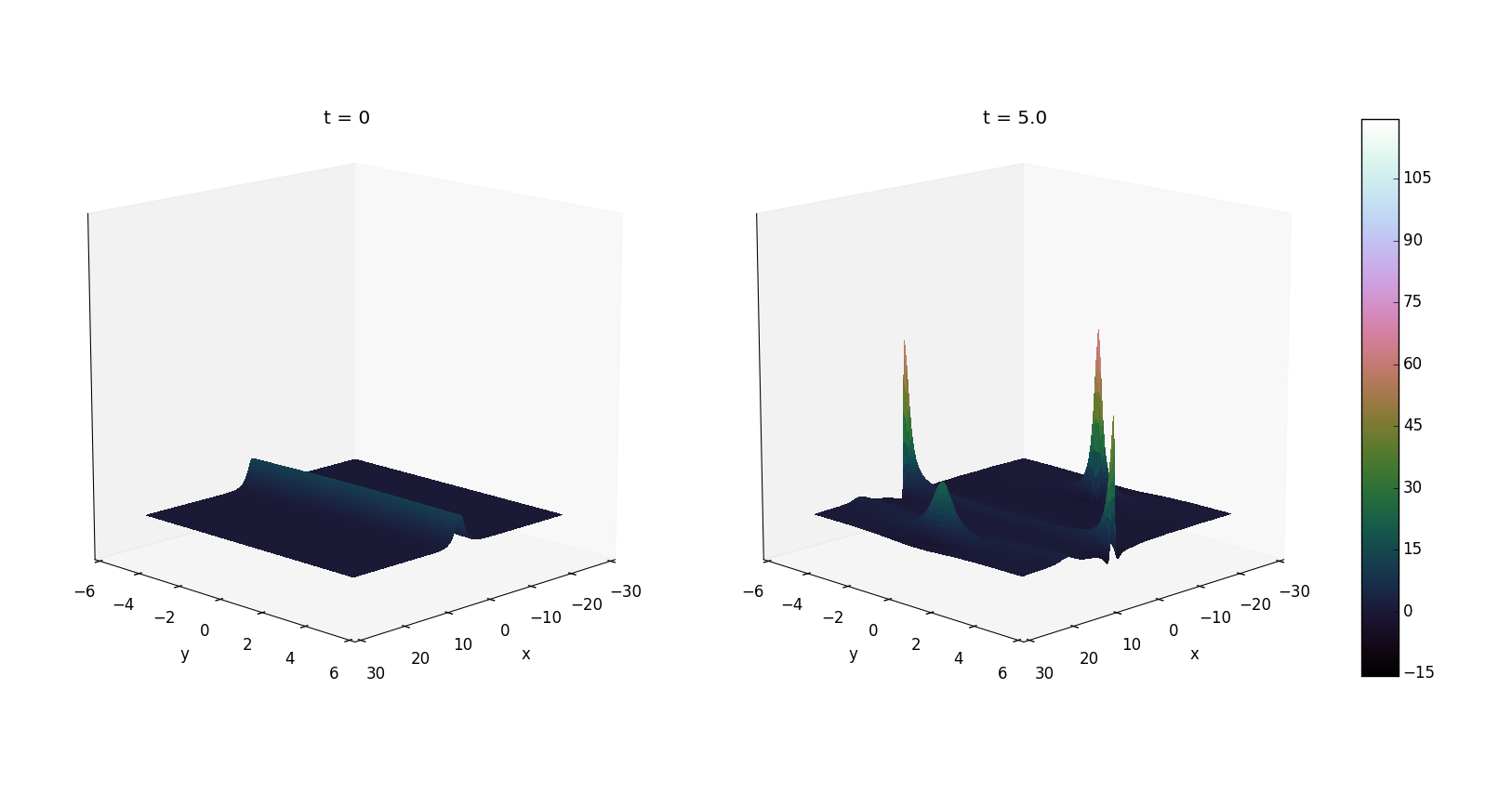}
	\caption{At left, perturbed KdV soliton as initial datum. At right, solution of the KP-I equation with $p=1$ at time $t=5$.}
	\label{fig_pline_sol}
\end{figure}

\begin{figure}[H]
	\centering
	\includegraphics[scale=0.4]{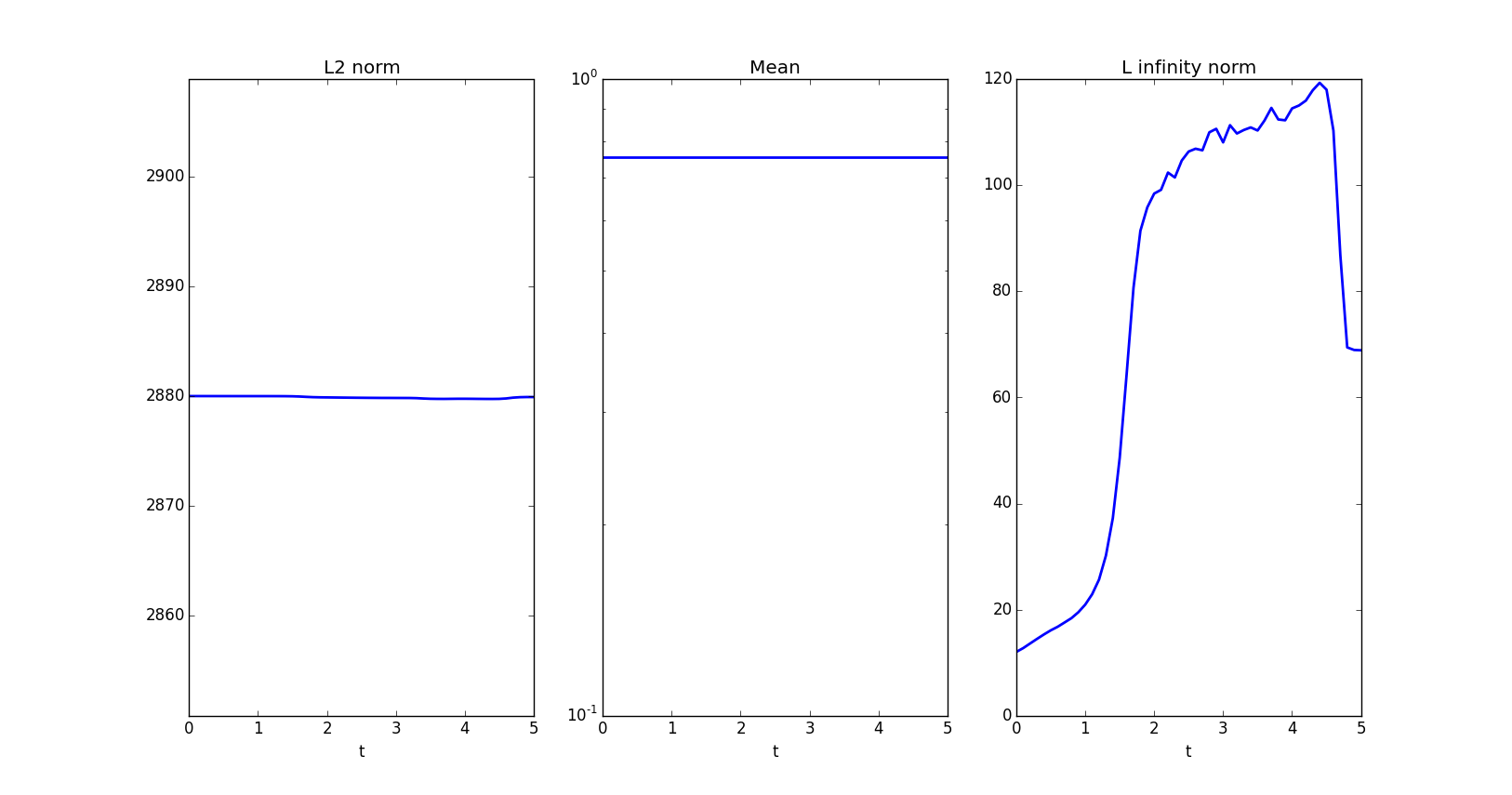}
	\caption{$L^2$-norm, mean and $L^{\infty}$-norm with initial datum equal to a perturbed KdV soliton (cf Figure \ref{fig_pline_sol})}
	\label{fig_pline_norms}
\end{figure}

\subsection{Blow-up in finite time}

To illustrate the blow-up in finite time of the KP-I equation when $p=2$, we consider the initial datum
\[ u_0(x,y) = 3 \partial_{xx} {\rm e}^{-(x^2+y^2)}. \]
We use 6$^{th}$ order compact schemes in the domain $[-10,10]\times[-2.5,2.5]$ with $N_x=201$, $N_y=50$ and $dt=10^{-6}$. Figure \ref{fig_blowup_sol} shows that the minimum blows up. We see in Figure \ref{fig_blowup_norms} that the $L^2$-norm and the mean are conserved, whereas the $L^{\infty}$-norm becomes singular.

\begin{figure}[H]
	\centering
	\includegraphics[scale=0.4]{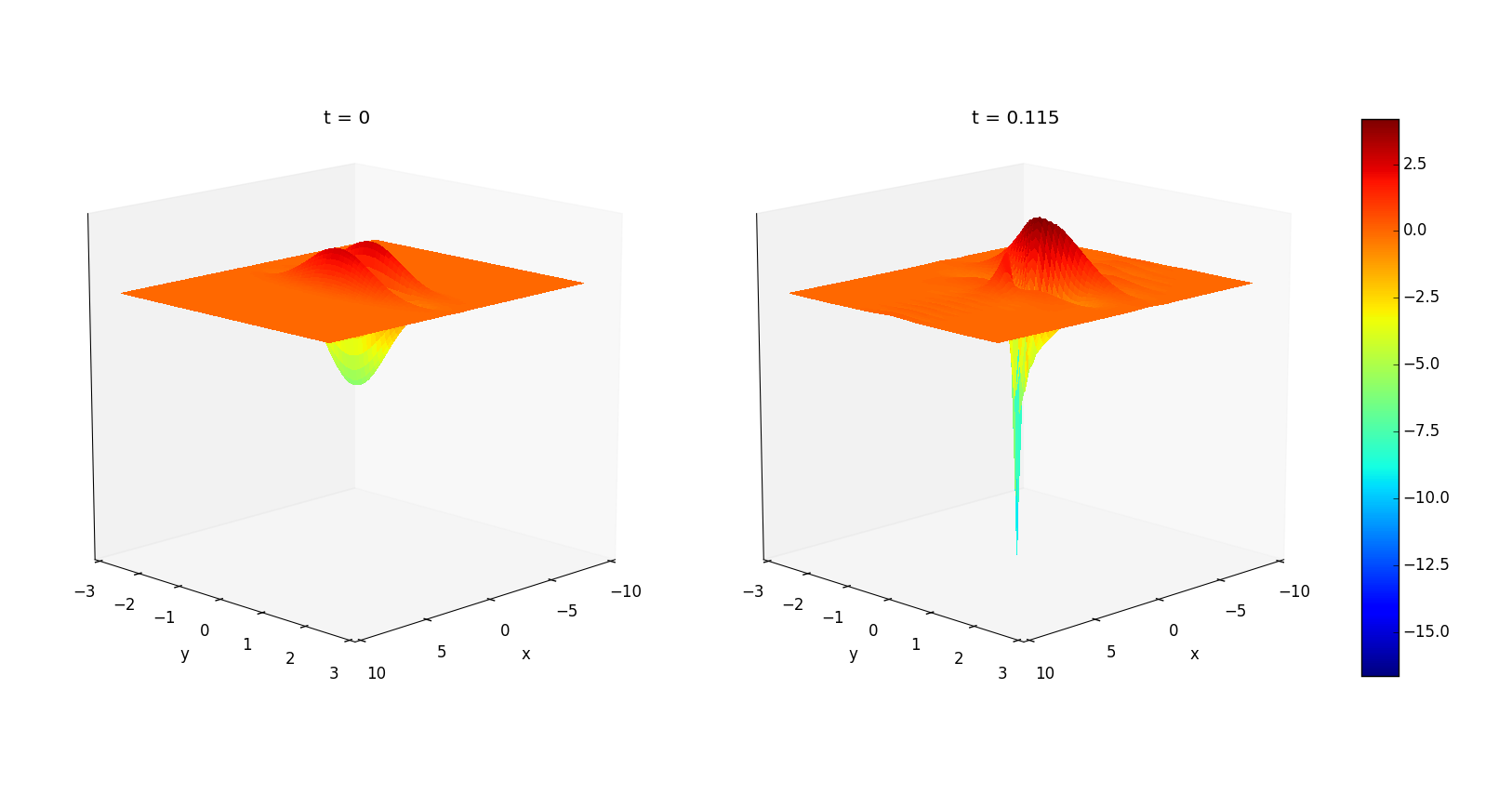}
	\caption{At left, second derivative of a Gaussian as initial datum. At right, blow-up of the solution of KP-I with $p=2$ at time $t=0.115$.}
	\label{fig_blowup_sol}
\end{figure}

\begin{figure}[H]
	\centering
	\includegraphics[scale=0.4]{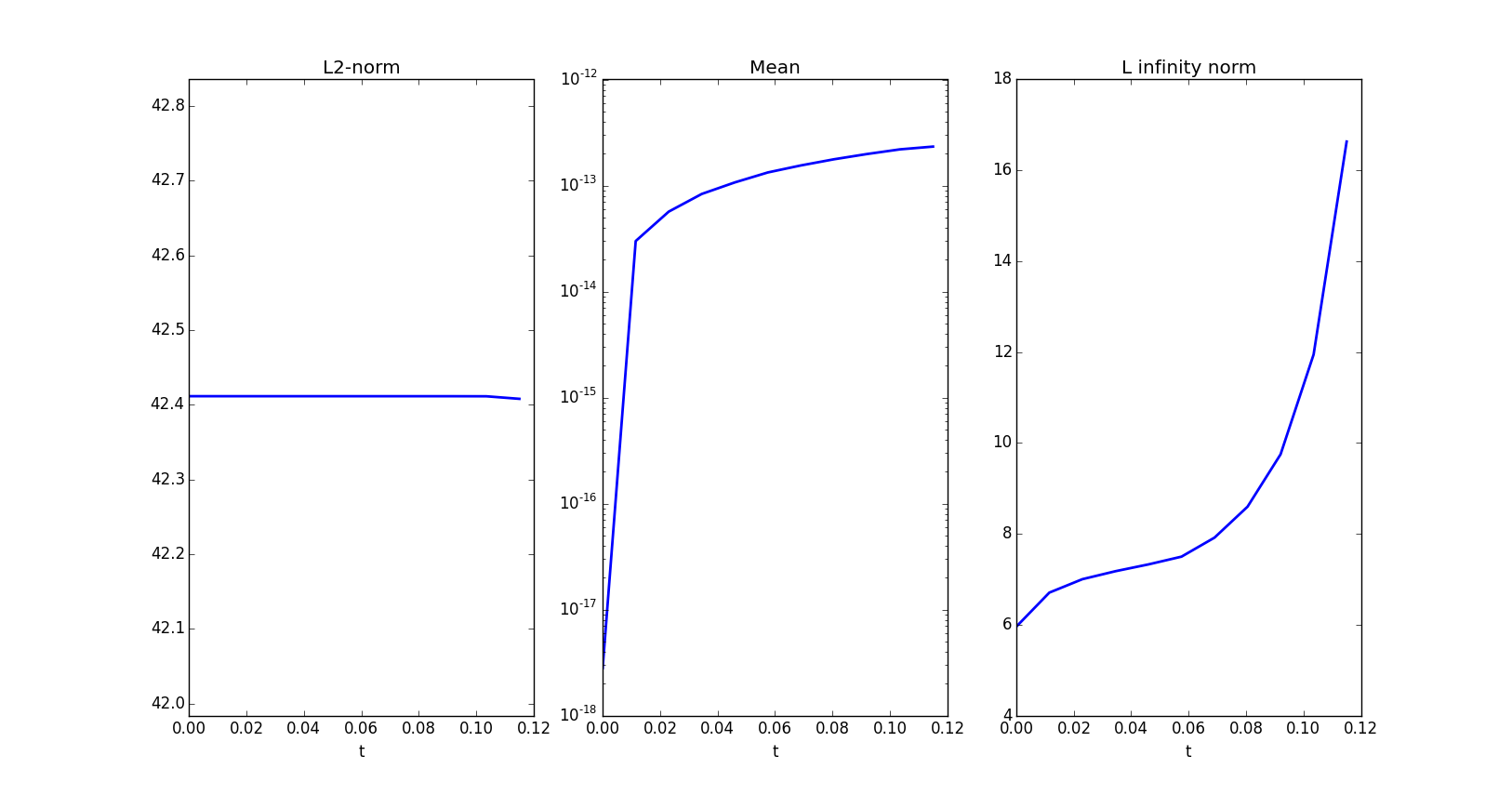}
	\caption{Evolution with time of the $L^2$-norm, mean and $L^{\infty}$-norm with the second derivative of a Gaussian as initial datum.}
	\label{fig_blowup_norms}
\end{figure}

\section{Conclusion}
	In this paper, we propose new compact schemes to solve the Kadomtsev-Petviashvili equations. 
We planned to do the simulations with periodic boundary conditions, these schemes can be easily adapted to other boundary conditions. This allows to find the expected results in terms of error, blow-up and transverse instabilities with high precision schemes.

\end{document}